\documentclass[11pt]{amsart}
\usepackage{amsmath,amssymb,amsthm,enumerate,hyperref,color}
\usepackage{amsfonts}
\usepackage{latexsym}
\usepackage{graphicx}
\usepackage{layout}
\setlength{\textwidth}{14cm}
\newtheorem{theorem}{Theorem}[section]

\newtheorem{proposition}[theorem]{Proposition}
\newtheorem{lemma}[theorem]{Lemma}
\newtheorem{corollary}[theorem]{Corollary}
\newtheorem{remark}[theorem]{Remark}

\newcommand{\sezione}[1]{\section{#1}\setcounter{equation}{0}}

\newcommand{\nor}{\Arrowvert}

\def\up{u_p}

\def\e{{\epsilon}}
\def\di12{\mathcal{D}^{1,2}(\R^n)}

\def\d{\delta}
\def\D{\Delta}
\def\l{{\lambda}}

\def\a{{\alpha}}
\def\b{{\beta}}
\def\g{{\gamma}}

\def\wth{{w_{\theta}}}

\def\de{\partial}

\newcommand{\R}{\mathbb{R}}

\newcommand{\N}{\mathbb{N}}

\newcommand{\cC}{{\mathcal C}}

\newcommand{\cK}{{\mathcal K}}

\newcommand{\cO}{{\mathcal O}}

\newcommand{\cS}{{\mathcal S}}

\hyphenation{cor-re-spon-ding boun-da-ry con-ti-nuous cha-rac-te-ri-zed}
\begin{document}
\title[Separation of branches]{Separation of branches of $O(N-1)$-invariant solutions for a semilinear elliptic equation}
\author[F.~Gladiali]{Francesca Gladiali} 
\thanks{The author is supported by Gruppo Nazionale per
l'Analisi Matematica, la Probabilit\'a e le loro Applicazioni (GNAMPA)
of the Istituto Nazionale di Alta Matematica (INdAM) and by PRIN-2012-grant ``Variational and perturbative aspects of nonlinear differential problems''.}
\address{Matematica e Fisica, Polcoming, Universit\`a di Sassari, via Piandanna 4, 07100 Sassari, Italy. \texttt{fgladiali@uniss.it}}

\begin{abstract}
\noindent We consider the problem
\begin{equation}\nonumber\label{0}
\left\{\begin{array}{ll}
-\Delta u=u^p +\l u&\hbox{ in }A\\
u>0 &\hbox{ in }A\\
u=0  &\hbox{ on }\de A
\end{array}\right.
\end{equation}
where $A$ is an annulus in $\R^N$, $N\geq 2$ , $p\in(1,+\infty)$ and $\l\in
(-\infty,0]$. Recent results, \cite{GGPS}, ensure that there exists a
  sequence $\{p_k\}$ of exponents ($p_k\to +\infty$) at which a nonradial
  bifurcation from the radial solution occurs. 
Exploiting the properties of $O(N-1)$-invariant spherical harmonics, we introduce two suitable cones $\mathcal{K}^1$ and $\mathcal{K}^2$ of $O(N-1)$-invariant functions that allow to separate the branches of bifurcating solutions from the others, getting the unboundedness of these branches.
\end{abstract}

\maketitle
\sezione{Introduction}
In this paper we consider the problem
\begin{equation}\label{1}
\left\{\begin{array}{ll}
-\Delta u=u^p +\l u&\hbox{ in }A\\
u>0 &\hbox{ in }A\\
u=0  &\hbox{ on }\de A
\end{array}\right.
\end{equation}
where $A$ is an annulus of $\R^N$, i.e. $A:=\{x\in \R^N\, :\,
a<|x|<b\}$, $b>a>0$, $N\geq 2$, $p\in(1,+\infty)$ and $\l\in
(-\infty,0]$. For simplicity one can think to \eqref{1} with $\l=0$.\\
It is well known that problem (\ref{1}) has a radial solution for any
$p\in (1,+\infty)$ (see \cite{KW}), and that this solution is unique
if
  $\l\in(-\infty,0]$ (see \cite{T} and \cite{F}). We will denote by $\up$ this
radial solution and  by $\cS$ the curve of radial
solutions of (\ref{1}) in the product space $ (1,+\infty)\times
C^{1,\a}_0(\overline A)$, where $C^{1,\a}_0(\overline A)$ is  the set of continuous differentiable  functions
 on $\overline A$ which vanish on  $\de A$ and whose first order
 derivatives are H\"older continuous with exponent $\a$. 
In other words:
\begin{equation}\label{1.2}
\cS:=\{(p,u_p)\in  (1,+\infty)\times C^{1,\a}_0(\overline A)\, \hbox{ such that }\up
    \hbox{ is the radial}
\hbox{ solution of (\ref{1})}\}.
\end{equation}
In this paper we study the nonradial solutions that bifurcate from the curve $\cS$ as the exponent $p$ varies.
Let us recall that 
a point $(p_k,u_{p_k})\in \cS$
is a {\em  nonradial bifurcation point} if in  every neighborhood of
$(p_k,u_{p_k})$  in $(1,+\infty)\times C^{1,\a}_0(\overline A)$
there exists a nonradial solution $(p,v_p)$ of (\ref{1}).\\
In the paper  \cite{GGPS} the
authors show that
there exists a
sequence of values of the exponent $p_k$ such that $p_k\to +\infty$
and $(p_k,u_{p_k})$ is a nonradial bifurcation point for $\cS$.\\
These values $p_k$ are found considering the linearized equation at the radial solution $u_p$. In  \cite{GGPS} it is shown that
$\up$ is degenerate if and only if, for some $k\geq 1$,
\begin{equation}\label{1.3a}
\a_1(p)+k(N-2+k)=0
\end{equation}
where $\a_1(p)$ is the first eigenvalue of the one-dimensional operator
\begin{equation}\label{1.3}
\widehat{L}_p(v)=r^2\left(-v'' -\frac{N-1}r v'-p\up
^{p-1}v-\l v\right)
\end{equation}
in the space of functions of $H^1_0(a,b)$. From the analyticity of $\a_1(p)$ with respect to $p$ and from the asymptotic behavior of $\a_1(p)$ as $p\to 1$ and as $p\to +\infty$ it is proved in \cite{GGPS} that for any $k\geq 1$ the quantity $\a_1(p)+k(N-2+k)$ changes sign as $p$ varies in $(1,+\infty)$. Each time $\a_1(p)+k(N-2+k)$ changes sign an eigenvalue of the linearized operator changes sign so that the Morse index of the radial solution changes. 
Let us call  {\em{Morse index changing
  points}} the pairs $(p_k,u_{p_k})\in \cS$ such that the Morse index
of the radial solution $u_p$ changes at $p_k$. These points are characterized
by
\begin{equation}\label{1.5}
\left(\a_1(p_k+\d)+ k(N-2+k)  \right)\left(\a_1(p_k-\d)+ k(N-2+k)\right)<0\quad
\hbox{ for }\d\in (0,\d_0)
\end{equation}
for some $\d_0>0$
and for some $k\geq 1$. 
In \cite{G} it is shown that if $(
p_k,u_{p_k})$ is a Morse index changing point there exists a
continuum, $\cC(p_k)\subset (1,+\infty)\times C^{1,\a}_0(\overline A)$, of nonradial solutions 
bifurcating from that point. 
This continuum $\cC(p_k)$ obeys
the so-called
Rabinowitz alternative,
(see Theorem 3.3 in \cite{G}), i.e. either $\cC(p_k)$
is unbounded in $(1,+\infty)\times C^{1,\a}_0(\overline A)$ or it must meet the curve of radial solutions $\cS$
in another Morse index changing point. Here we are able to prove that the second alternative is not possible when $k=1$ or $2$ and $N\geq 3$. Our main result is the following:
\begin{theorem}\label{t1}
If $N\geq 3$, there exist at least two exponents $p_1,p_2\in(1,+\infty)$ such that
$(p_k,u_{p_k})$ is a nonradial bifurcation point for the curve $\cS$,
related by (\ref{1.3}) to $k=1$ and $k=2$ at which the continuum of bifurcating solutions
$\cC(p_k)$ is unbounded in $(1,+\infty)\times C^{1,\a}_0(\overline A) $.
\end{theorem}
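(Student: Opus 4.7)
The plan is to rule out the second horn of the Rabinowitz alternative recalled before Theorem~\ref{t1}: I will show that the bifurcating continuum $\cC(p_k)$ can never reach a second Morse-index-changing point on $\cS$, so it must be unbounded. The mechanism is to trap $\cC(p_k)$ inside a closed cone $\cK^k\subset C^{1,\a}_0(\overline A)$ of $O(N-1)$-invariant functions whose defining inequalities record the sign structure of the $k$-th $O(N-1)$-invariant spherical harmonic $Y_k$ (the Gegenbauer polynomial in $t=\cos\theta$, with $\theta$ the polar angle around $e_N$).

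Concretely, since $Y_1=\cos\theta$ is odd and positive on $\{x_N>0\}$, I would set
\[
\cK^1:=\{u\in C^{1,\a}_0(\overline A)\,:\,u\text{ is }O(N-1)\text{-invariant and }\partial_{x_N}u\ge 0\text{ in }A^+\},
\]
with $A^+:=A\cap\{x_N>0\}$. Since $Y_2\propto (Nt^2-1)$ has three nodal zones on $S^{N-1}$, $\cK^2$ should encode the corresponding two-sided sign pattern: $u$ even in $x_N$ and, on each sphere $\{|x|=\rho\}$, nondecreasing as a function of $|x_N|$. Both cones are closed and convex, both contain $\cS$ on their boundary, and both have nonempty interior.

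Three technical pillars then carry the argument. \textbf{(a) Local start.} Crandall--Rabinowitz at $(p_k,u_{p_k})$, together with \eqref{1.3a} giving the kernel profile $\phi_k(r)Y_k(\cos\theta)$, produces the expansion $u=u_p+s\phi_k(r)Y_k(\cos\theta)+o(s)$; for small $|s|$ one sign of $s$ lies in $\mathrm{int}\,\cK^k$, the other in the reflected cone. \textbf{(b) Strict trapping.} Any nonradial solution $u\in\cK^k$ of \eqref{1} lies in $\mathrm{int}\,\cK^k$. For $k=1$ this follows by applying the Hopf lemma to $w:=\partial_{x_N}u$, which solves a linear elliptic equation on $A^+$ with $w=0$ on the curved part of $\partial A^+$ and $w\ge 0$ on $\{x_N=0\}$: either $w\equiv 0$ (forcing $u$ radial by $O(N-1)$-invariance plus reflection symmetry), or $w>0$ strictly in $A^+$. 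For $k=2$ the analogue demands a two-sided sphere-by-sphere comparison principle. \textbf{(c) Closedness.} $\cK^k$ is closed in $C^{1,\a}_0(\overline A)$, so $\cC(p_k)$ can exit $\cK^k$ only through $\cS\cap\partial\cK^k$.

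Combining (a)--(c), if $\cC(p_k)$ were bounded then by Rabinowitz it meets $\cS$ at a Morse-index-changing point $(q,u_q)$; the approach direction is a kernel element of the linearization at $u_q$, so it has the form $\phi_{k'}(r)Y_{k'}(\cos\theta)$ and must lie in the tangent cone of $\cK^k$ at $u_q$, which forces $k'=k$, i.e., $\a_1(q)+k(N-2+k)=0$. The Crandall--Rabinowitz transversality at $q$ then shows that $\cC(p_k)$ would contain a closed arc inside $\cK^k$ joining two distinct zeros of $\a_1(p)+k(N-2+k)$, along which the $Y_k$-coefficient has to change sign; but (b) forbids sign changes of that coefficient inside $\cK^k\setminus\cS$, a contradiction. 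Hence $\cC(p_k)$ is unbounded for $k=1,2$. The main obstacle is pillar (b) in the case $k=2$: unlike $k=1$, the sign pattern of $Y_2$ is not encoded by monotonicity in a fixed Cartesian direction, so the strong maximum principle must be performed two-sidedly on each sphere, exploiting the $O(N-1)$-invariance to reduce to a two-dimensional half-annular problem where Hopf-type arguments can close.
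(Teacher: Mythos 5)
Your key step---ruling out the bounded alternative---does not hold up. If $\cC(p_k)$ is bounded and trapped in the cone, it may meet $\cS$ at another Morse index changing point $(q,u_q)$ also associated with $\mu_k$, and nothing forces the $Y_k$-coefficient to change sign along the connecting arc: near \emph{both} endpoints the continuum can approach the radial curve along the same direction $+\phi_k(r)Y_k$, which is the only direction compatible with the cone anyway, so no contradiction arises. Indeed the paper does not exclude bounded continua in the cone joining two $\mu_k$-points; it argues differently and only proves that \emph{at least one} continuum per $k\in\{1,2\}$ is unbounded (your proposal aims at the stronger statement that every such continuum is unbounded, which is exactly what cannot be obtained---the paper even remarks it cannot prove uniqueness of the zero of $\a_1(p)+\mu_k$). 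The missing ingredient is quantitative: after showing that $T(p,\cdot)$ maps the cone into itself (Lemma \ref{l31}), the paper computes Dancer's fixed point index of $I-T(p,\cdot)$ \emph{in the cone} at $u_p$, equal to $\pm1$ when $\a_1(p)+\mu_i>0$ and to $0$ when $\a_1(p)+\mu_i<0$ (Proposition \ref{p31}); an Ize-type degree argument (Proposition \ref{p1}, rerun in the cone) then shows that any bounded continuum contains an \emph{even} number of Morse index changing points, while the total number of sign changes of $\a_1(p)+\mu_i$ on $(1,+\infty)$ is \emph{odd} (positive near $p=1$, negative for large $p$), so some continuum must be unbounded. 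Without this index-in-cone computation and parity count, your conclusion does not follow.

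There are secondary gaps as well. Pillars (a) and the ``closed arc'' step invoke Crandall--Rabinowitz and its transversality at $p_k$ and at $q$; these are not available here, since the bifurcation results of \cite{GGPS} and \cite{G} are degree-theoretic, yield only a continuum (not a curve $u_p+s\phi_kY_k+o(s)$), and no transversal or simple crossing of $\a_1(p)+\mu_k$ through $0$ is known. The local behavior you need is instead what Proposition \ref{p2} supplies, proved by a compactness argument. Moreover, your cone $\cK^1$ (monotonicity of $\partial_{x_N}u$ on $A\cap\{x_N>0\}$) is not the paper's angular-monotonicity cone, has empty interior in $C^{1,\a}_0(\overline A)$ unless you work inside the $O(N-1)$-invariant subspace $X$, and the claim that every nonradial solution lies in the interior of the cone is delicate in the $C^{1,\a}$ topology near $\{x_N=0\}$ and the poles, where the relevant derivative vanishes; the invariance $T(p,\cdot)\cK^i\subseteq\cK^i$ is the cleaner trapping mechanism. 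Finally, the $k=2$ difficulty you flag is resolved in the paper not by a new two-sided Hopf argument but by building evenness in $z=\cos\theta$ and monotonicity on $[0,\tfrac\pi2]$ into $\cK^2$ and repeating the maximum-principle computation for $w_\theta$ on $(a,b)\times(0,\tfrac\pi2)$.
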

\noindent The unboundedness of the bifurcating branch in Theorem \ref{t1} implies or that the branch exists for every $p>p_k$ (giving a multiplicity result for problem \eqref{1}) or that the solutions along the branch make blow-up in the $C^{1,\a}$-norm at some exponent $\bar p\geq \frac{N+2}{N-2}$. Since problem \eqref{1} can be supercritical we cannot exclude that the branch exists only for a fixed value of the exponent $p$, even if we do not think this is the case. So we think that the behavior of the solutions along these unbounded branches deserves to be investigated further.\\ 
\noindent This result is a first attempt to separate all the branches generated by spherical harmonics ($O(N-1)$-invariant), related by \eqref{1.5} to a different eigenvalue $\mu_k=k(N-2+k)$ of the Laplace Beltrami operator on the $(N-1)$-dimensional sphere. Theorem \ref{t1} says that the branches generated by the spherical harmonics corresponding to $\mu_1=N-1$ and $\mu_2=2N$ ($k=1,2$) are separated from the others. The proof relies on the fact that functions which are $O(N-1)$-invariant, can be written as functions which depend only on $r$ and $\theta$ in radial coordinates, see the Appendix for details. Then to separate the continuum $\cC(p_1)$ and $\cC(p_2)$ from the others we introduce two different cones in $C^{1,\a}_0(\overline A)$ and we set our problem on these cones.\\
The cones $\mathcal{K}^1$ and $\mathcal{K}^2$ are defined in Section \ref{s3}, see \eqref{3.1} and \eqref{3.1-bis} and their definition is completely new. To define $K^1$  
we look for solutions which are non increasing with respect to the angle $\theta$ on the interval $[0,\pi]$. This property is preserved along the branch $\cC(p_1)$, as proved in Section \ref{s3}, and allows to distinguish the branch generated by $k=1$ from the others. Functions with this type of symmetry are said Foliated Schwarz symmetric and arise when looking for solutions with low Morse index (see \cite{GPW} as an example). Indeed in our case they arise when the Morse index of the radial solution $u_p$ goes from $1$ to $N+1$, see \eqref{morse-index}.\\ 
\noindent To define $\mathcal{K}^2$ we consider $O(N-1)$ invariant functions which are non increasing with respect to the angle $\theta$ on the interval $[0,\frac{\pi}2]$ and which are even in $z=\cos\theta$. Again this property is preserved along the continuum $\cC(p_2) $ and it is enough to exclude the branches bifurcating from exponents related by \eqref{1.3a} to $\mu_k$ with $k\neq 2$.\\
As said before this is a first attempt to separate branches of nonradial solutions when $N\geq 3$. The definition of the cones $\mathcal{K}^1$ and $\mathcal{K}^2$ is suggested by the shape of the $O(N-1)$-invariant spherical harmonics given explicitly in the Appendix.  We believe that it should be possible to separate all the branches generated by different spherical harmonics investigating in a deeper way their properties.\\
\noindent The cones $\mathcal{K}^1$ and $\mathcal{K}^2$, introduced here, separate the first spherical harmonics and can be used to distinguish solutions in a radially symmetric domain. The same method can be used to separate branches of nonradial, $O(N-1)$-invariant functions in other settings, for example in the case of the exterior of the ball, see \cite{GP} or in the case of the critical H\'enon problem in $\R^N$, see \cite{GGN}. We believe that also in these cases the cones $\mathcal{K}^1$ and $\mathcal{K}^2$ can give the unboundedness of the bifurcating branch of non radial solutions. Another application of these cones can be, for instance, to reduce the dimension of the kernel of the linearized equation to some problems, see \cite{GGT} as an example.\\[.5cm]
The problem to separate branches of solutions generated by different values of $k$  was solved in dimension $N=2$ by Dancer in the paper \cite{DA1}. Using the fact that the spherical harmonic functions associated to the eigenvalues $\mu_k$ are periodic with with period $\frac{2\pi}k$ when $N=2$ Dancer introduced some suitable cones $\mathcal{K}^k$ of periodic functions in which only the $k$-th spherical harmonic lies. This allows to separate branches of solutions related to different values of $\mu_k$ and can give also multiplicity results, see \cite{GGN2} as an example.  \\ 
Using exactly the same cones $\cK^n$ of Dancer we have the following result:
\begin{corollary}\label{c1}
If $N=2$, for any $n\in \N$ there exists an exponent $p_n\in(1,+\infty)$ such that the continuum $\cC(p_n)$ is unbounded in
$C^{1,\a}_0(\overline A) $. Moreover $\cC(p_n)\cap \cC(p_m)=\emptyset$ if $n\neq m$. Finally
for any $p>p_n$ there exist at least $n$ nonradial positive solutions
of (\ref{1}).
\end{corollary}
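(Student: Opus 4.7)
\begin{altproof}{Corollary \ref{c1}}
The plan is to replay verbatim the argument of Dancer \cite{DA1}: introduce for each $n\in\N$ a cone $\cK^n\subset C^{1,\a}_0(\overline A)$ of $\frac{2\pi}{n}$-periodic, even, $\theta$-monotone functions, apply Theorem 3.3 of \cite{G} inside $\cK^n$, and exploit the fact that only the $n$-th Fourier harmonic sits in $\cK^n$ in order to kill the secondary bifurcation alternative of Rabinowitz's dichotomy. Using polar coordinates $(r,\theta)$ on $A\subset\R^2$, I would set
\begin{equation}\label{cone-Dancer}
\cK^n:=\Big\{u\in C^{1,\a}_0(\overline A)\,:\, u(r,\theta+\tfrac{2\pi}{n})=u(r,\theta),\; u(r,-\theta)=u(r,\theta),\; \theta\mapsto u(r,\theta)\text{ non-increasing on }[0,\tfrac{\pi}{n}]\Big\},
\end{equation}
a closed convex cone containing the entire radial curve $\cS$. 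The first and technically most delicate task --- and the step I expect to be the main obstacle --- is to prove that $\cK^n$ is invariant under the solution operator $u\mapsto(-\D)^{-1}(u^p+\l u)$: the two discrete symmetries are preserved by uniqueness of the Poisson problem, while $\theta$-monotonicity on the sector $[0,\pi/n]$ requires a moving-plane argument on a fundamental wedge of aperture $\pi/n$, exactly as done in \cite{DA1}.

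Once cone invariance is in hand I would apply Theorem 3.3 of \cite{G} restricted to $\cK^n$. In $N=2$ condition (\ref{1.3a}) reduces to $\a_1(p)+n^2=0$, so the analysis of $\a_1(p)$ from \cite{GGPS} produces a Morse-index changing point $p_n$ at which the kernel of the linearization is spanned by $w(r)\cos(n\theta)$ and $w(r)\sin(n\theta)$, with $w$ the positive first eigenfunction of $\widehat{L}_{p_n}$; the first of these belongs to $\cK^n$. Theorem 3.3 of \cite{G} then yields a continuum $\cC(p_n)\subset(1,+\infty)\times \cK^n$ of nonradial solutions which either is unbounded or returns to $\cS$ at some other Morse-index changing point $(p_m,u_{p_m})$. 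In the latter case the tangent direction of $\cC(p_n)$ at the return point is a nonradial element of the kernel at $u_{p_m}$ lying in $\cK^n$; evenness forces it to be of the form $w_m(r)\cos(m\theta)$, and $\cK^n$-membership simultaneously requires $n\mid m$ (from $\frac{2\pi}{n}$-periodicity of $\cos(m\theta)$) and $m\leq n$ (from non-increase on $[0,\pi/n]$), hence $m=n$, a contradiction. Thus $\cC(p_n)$ is unbounded.

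For the disjointness, a common nonradial $v\in\cC(p_n)\cap\cC(p_m)$ with $n<m$ would be both $\frac{2\pi}{n}$- and $\frac{2\pi}{m}$-periodic, hence of period $\frac{2\pi}{L}$ with $L$ the least common multiple of $n$ and $m$; writing $n=dj$, $m=dk$ with $\gcd(j,k)=1$ and $1\leq j<k$ gives $L=djk\geq 2n$, so $\frac{2\pi}{L}\leq \frac{\pi}{n}$, and combining $v(r,0)=v(r,\frac{2\pi}{L})$ with $\theta$-monotonicity on $[0,\pi/n]$ forces $v(r,\cdot)$ to be constant on $[0,\frac{2\pi}{L}]$ and therefore, by periodicity, radial --- a contradiction. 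Finally, since (\ref{1}) is Sobolev-subcritical for every $p>1$ when $N=2$, standard a priori bounds prevent $C^{1,\a}$-blow-up of a continuum of positive solutions along any compact $p$-subinterval, so the projection of $\cC(p_n)$ onto the $p$-axis is unbounded and, by connectedness, covers $[p_n,+\infty)$; the pairwise disjoint continua $\cC(p_1),\dots,\cC(p_n)$ then supply $n$ distinct nonradial positive solutions at every $p>p_n$, completing the corollary.
\end{altproof}
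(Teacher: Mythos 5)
Your overall route (Dancer's cones $\cK^n$, invariance of the solution operator on the cone, Rabinowitz alternative run inside the cone, subcriticality in dimension $2$ for the multiplicity statement) is the same as the paper's, but the step where you exclude the second alternative contains a genuine gap. Theorem 3.3 of \cite{G} (Theorem \ref{t12} here) only says that a bounded continuum must return to the radial curve $\cS$ at a Morse-index changing point with a \emph{different exponent} $p_h\neq p_n$; it says nothing about the angular mode at the return point. Your cone argument correctly shows that any return point admissible in $\cK^n$ must again correspond to the harmonic $\cos(n\theta)$, i.e.\ to another root of $\a_1(p)+n^2=0$ --- but that is not a contradiction, because this equation may have several roots: the paper explicitly remarks that it cannot prove uniqueness of the solutions of $\a_1(p)+\mu_k=0$. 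A bounded continuum in $\cK^n$ joining two distinct Morse-index changing points both associated with $\mu_n$ is perfectly consistent with everything you established, so ``Thus $\cC(p_n)$ is unbounded'' does not follow. The paper closes exactly this gap with the degree-parity argument of Proposition \ref{p1}: applied inside the cone, it shows that a bounded continuum must contain an \emph{even} number of Morse-index changing points, while the total number of Morse-index changing points associated with $\mu_n$ is \emph{odd} (since $\a_1(p)+n^2>0$ for $p$ near $1$ and $<0$ for $p$ large, by \cite{GGPS}); hence at least one of these points carries an unbounded continuum, and that point is the $p_n$ of the statement --- an existence claim, not a claim about the continuum emanating from every such point.

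Two smaller points. Your disjointness argument only treats a common \emph{nonradial} element of $\cC(p_n)\cap\cC(p_m)$; you should also rule out a common radial point, which is immediate because such a point would be a bifurcation point for both cones, forcing the same $p$ to satisfy $\a_1(p)+n^2=0$ and $\a_1(p)+m^2=0$ with $n\neq m$, which is impossible. Finally, cone invariance is not really the delicate obstacle you anticipate: the paper's Lemma \ref{l31} obtains it by differentiating the equation in $\theta$ and applying the maximum principle to $w_\theta$ on the fundamental sector (with the boundary conditions for $w_\theta$ coming from the evenness and periodicity), and the same computation works for Dancer's $\cK^n$ in $N=2$; a reflection argument would also do, but it is not needed.
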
 
The multiplicity result for $N=2$ follows since problem \eqref{1} is subcritical in $\R^2$ and cannot be obtained for $N\geq 3$ in an easy way.\\

The paper is organized as follows: in section \ref{s2} we
introduce all the notations. In section \ref{s3} we
prove Theorem \ref{t1} and Corollary \ref{c1}. Finally in the Appendix
we derive some properties of functions which are $O(N-1)$-invariant 
and we write explicitly the $O(N-1)$-invariant spherical harmonics.

\sezione{Notations and preliminary results}\label{s2}
The starting point in the study of bifurcation is the analysis  of the 
degeneracy points to (\ref{1}). To this end we consider the linearized equation at the radial solution $u_p$, i.e. 
\begin{equation}\label{l}
\left\{\begin{array}{ll}
-\Delta v-p\up^{p-1}v-\l v=0 & \hbox{ in }A\\
v=0 & \hbox{ on }\de A.
\end{array}\right.
\end{equation}
It is proved in \cite{GGPS} (see Lemma 2.3) that equation \eqref{l} admits a nontrivial solution if and only if
\begin{equation}\label{2.1}
\a_1(p)+\mu_k=0, \quad \hbox{ for some } k\geq 1,
\end{equation}
where $\a_1(p)$ is the first eigenvalue of the one-dimensional operator $\widehat L_{p}$ defined in \eqref{1.3}
and $\mu_k=k(N-2+k)$,
$k=0,1,\dots$ are the eigenvalues of the Laplace-Beltrami operator
$-\Delta_{S^{N-1}}$ on the sphere $S^{N-1}$.\\
Moreover  the solutions $v$ of the
linearized equation (\ref{l}) corresponding to a
degeneracy point $p_i$ can be written as
\begin{equation}\label{au}
v(x)=w_{1,p_i}(|x|)\phi_k\left(\frac x{|x|}\right)
\end{equation}
where $w_{1,p_i}(r)$ is the first positive eigenfunction of $\widehat L_{p_i}$
and $\phi_k$ is an eigenfunction of the Laplace-Beltrami
operator on  $S^{N-1}$ relative to the eigenvalue $\mu_k$. \\
As explained in
\cite{GGPS}, the Morse index of the radial solution $u_p$ that we denote by $m(p)$ depends only
on the sign of the sum $\a_1(p)+\mu_k$ for  $k\geq 1$ and precisely is given by (recall that $\a_1(p)<0$ for any $p$)
\begin{equation}\label{morse-index}
m(p) = \sum_{0 \leq  j<\frac{2-N}2+\frac1 2 \sqrt{(N-2)^2-4\a_1(p)}
\atop_{j\  integer}} \frac{(N+2j-2)(N+j-3)!}{(N-2)!\,j!}.
\end{equation}
\noindent In \cite{GGPS} we
proved the following result:
\begin{theorem}\label{t02}
The Morse index changing points are nonradial bifurcation points for (\ref{1}). Moreover the
exponents $p_i$ of these
points can be arranged in a
sequence that diverges to $+\infty$.
\end{theorem}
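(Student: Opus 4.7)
The plan is to combine the characterization of degeneracy recalled in \eqref{2.1} with a topological-degree bifurcation argument carried out on a symmetry-reduced subspace, and then to exploit the asymptotics of $\a_1(p)$ to produce the diverging sequence.

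First I would recast problem \eqref{1} as the fixed-point equation $u = T_p(u) := (-\Delta)^{-1}(u^p + \l u)$, with $T_p$ a compact perturbation of the identity on $C^{1,\a}_0(\overline A)$. At any nondegenerate parameter $p$ the Leray--Schauder index $i(I - T_p, u_p)$ equals $(-1)^{m(p)}$, where $m(p)$ is given by \eqref{morse-index}. Because the eigenvalue $\mu_k$ of $-\Delta_{S^{N-1}}$ has multiplicity larger than one (for $N \geq 3$), a direct degree calculation in $C^{1,\a}_0(\overline A)$ need not produce an odd change at each crossing. To bypass this I would restrict to the closed subspace $X \subset C^{1,\a}_0(\overline A)$ of $O(N-1)$-invariant functions. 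By the Appendix, in $X$ each $\mu_k$-eigenspace is one-dimensional, so the Morse index computed in $X$ equals $m_X(p) = \#\{k \geq 1 : \a_1(p) + \mu_k < 0\}$, and it changes by exactly one as $p$ crosses a Morse index changing point.

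Second, at such a point $p_k$ the $X$-index of $u_p$ flips sign by \eqref{1.5}, so Rabinowitz's global bifurcation theorem produces a continuum of nontrivial solutions of $u = T_p(u)$ in $(1,+\infty) \times X$ emanating from $(p_k, u_{p_k})$. Nontrivial solutions sufficiently close to $(p_k, u_{p_k})$ cannot be radial, since the radial solution is unique for each fixed $p$ by \cite{T, F}; this proves the first assertion of the theorem.

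Third, for the sequence of exponents I would use the properties of $\a_1$ recalled after \eqref{1.3a}: $\a_1$ is analytic on $(1,+\infty)$, bounded on every compact subinterval, finite as $p \to 1^+$, and satisfies $\a_1(p) \to -\infty$ as $p \to +\infty$. Since $\mu_k = k(N-2+k) \to +\infty$, for each $k \geq 1$ the continuous map $p \mapsto \a_1(p) + \mu_k$ takes both signs, and by analyticity has at least one genuine sign-changing zero $p_k$. On every $[1+\epsilon, \overline p]$ the function $\a_1$ is bounded, so only finitely many $k$ can produce sign changes there; hence the collection of Morse index changing exponents has no accumulation point in $(1,+\infty)$ and must diverge to $+\infty$.

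The main obstacle I would expect is precisely the multiplicity issue at each degeneracy point: a direct application of global bifurcation on $C^{1,\a}_0(\overline A)$ can fail whenever the spherical harmonic eigenspace has even dimension, and reducing to the $O(N-1)$-invariant slice $X$, where each crossing has odd (indeed unit) multiplicity, is the essential technical step that makes the Leray--Schauder degree calculation effective.
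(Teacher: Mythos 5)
Your proposal is essentially the argument behind this theorem: the paper does not reprove it but quotes it from \cite{GGPS}, and your route is the same as the one used there --- pass to the $O(N-1)$-invariant space $X$, where each crossing $\a_1(p)+\mu_k=0$ is simple so the Leray--Schauder index of $u_p$ jumps and gives bifurcation, conclude nonradiality from uniqueness of the positive radial solution, and obtain the diverging sequence of exponents from the analyticity of $\a_1(p)$ and its asymptotics as $p\to1$ and $p\to+\infty$. The only imprecision is at $p\to 1^+$: mere finiteness of $\a_1$ there is not enough, since one needs $\a_1(p)+\mu_k>0$ for $p$ near $1$ (as established in \cite{GGPS}) both to guarantee a sign change for each fixed $k$ and to exclude accumulation of degeneracy exponents at $p=1$, which is what makes ``arranged in a sequence diverging to $+\infty$'' legitimate.
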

\noindent
To introduce the global bifurcation result obtained in \cite{G} we let $X$ 
be the subspace of $C^{1,\a}_0(\bar A)$
given by the functions which are $O(N-1)$-invariant, i.e. 
\begin{equation}\label{2.3}
X:=\{v\in C^{1,\a}_0(\overline A) \, \,
\hbox{s.t. }v(x_1,\dots,x_N)=v(g(x_1,\dots,x_{N-1}),x_N)\,\atop \hbox{
  for any } g\in O(N-1)\}
\end{equation}
where $O(N-1)$ is the orthogonal group in  $\R^{N-1}$, and
\begin{equation}\label{tpv}
\begin{array}{lrlc}T(p,v):&\hbox{ } (1,+\infty)\times X&\rightarrow&\hbox{  }X\\
&(p,v)\,\,\,\,\,&\mapsto & \left(-\D 
-\l\right)^{-1}\left( |v|^{p-1}v\right).
\end{array}
\end{equation}
$T$  is a compact operator
for fixed $p$ and is continuous with respect to $p$. We let
$S(p,v):=v-T(p,v)$. Then any solution of (\ref{1}) can be found as a
solution of $S(p,v)=0$ such that $v\geq 0$ in the annulus $A$.
Let us denote by $\Sigma$ the closure in $(1,+\infty)\times X$ of the set of
solutions of $S(p,v)=0$ different from $\up$,  i.e.
\begin{equation}\label{2.5}
\Sigma:=\overline{\{(p,v)\in (1,+\infty)\times X\,,\, S(p,v)=0\, ,\, v\neq \up\}}.
\end{equation}
If $(p_k,u_{p_k})\in \mathcal{S}$ is a nonradial bifurcation point, then
$(p_k,u_{p_k})\in \Sigma$. For  $(p_k,u_{p_k})\in \Sigma $
we will call $\cC(p_k)\subset \Sigma$
the closed  connected component of $\Sigma$  which contains
$(p_k,u_{p_k})$ and is maximal with respect to
the inclusion.\\
In \cite{G} we proved the following result:
\begin{theorem}\label{t12}
Let $(p_k,u_{p_k})$ be a Morse index changing point and let $\cC(p_k)$ as defined before. 
Then
either
\begin{itemize}
\item[a)] $\cC(p_k)$ is unbounded in $(1,+\infty)\times X $, or 
\item[b)] for some $h\neq k$, $(p_h,u_{p_h})$ is a
  Morse index changing point and $(p_h,u_{p_h})\in \cC(p_k)$.
\end{itemize}
\end{theorem}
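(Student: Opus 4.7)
The plan is to recast the problem as a bifurcation equation with $u_p$ as the trivial branch, and then apply Rabinowitz's global bifurcation theorem in the $O(N-1)$-invariant space $X$. First I would translate: set $w=v-u_p$ and define $F(p,w):=S(p,u_p+w)=w-[T(p,u_p+w)-u_p]$, so that $w\equiv 0$ corresponds to the radial curve $\cS$ and $F(p,w)=0$ with $w\neq 0$ corresponds to a nonradial solution near $\cS$. The Fréchet derivative with respect to $w$ at $w=0$ is
\begin{equation*}
F_w(p,0)=I-p(-\Delta-\l)^{-1}\big(u_p^{p-1}\,\cdot\,\big),
\end{equation*}
a compact perturbation of the identity on $X$, depending continuously on $p$.

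Next I would identify the degeneracy points of $F_w(\cdot,0)$ in $X$. By Lemma 2.3 of \cite{GGPS}, $F_w(p,0)$ fails to be invertible exactly when $\a_1(p)+\mu_k=0$ for some $k\geq 1$, and by \eqref{au} every element of $\ker F_w(p,0)$ has the form $w_{1,p}(r)\phi(x/|x|)$ with $\phi$ an eigenfunction of $-\Delta_{S^{N-1}}$ for $\mu_k$. The crucial observation is that inside $X$ the kernel is actually one-dimensional: as recalled in the Appendix, the $O(N-1)$-invariant eigenfunctions associated to $\mu_k$ form a one-dimensional subspace (spanned by the appropriate Gegenbauer polynomial evaluated at $\cos\theta$). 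Hence each degeneracy of $F_w(\cdot,0)$ restricted to $X$ has geometric multiplicity exactly one.

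Then I would compute the jump of the Leray–Schauder degree of $F_w(p,0)$ as $p$ crosses a Morse index changing point $p_k$. Because the crossing eigenvalue is simple in $X$ and the condition \eqref{1.5} forces $\a_1(p)+\mu_k$ to change sign, the corresponding eigenvalue of $p(-\Delta-\l)^{-1}u_p^{p-1}$ crosses $1$ transversally; a Crandall–Rabinowitz style argument (or a direct sign count of real eigenvalues greater than $1$) shows that the local Leray–Schauder index of the trivial solution $w=0$ changes sign as $p$ crosses $p_k$. Conversely, at a degeneracy where \eqref{1.5} fails (i.e.\ $\a_1(p)+\mu_k$ touches $0$ without changing sign), the index does not change: this is where it is essential that we measure the index in $X$, so we see only one crossing mode per $\mu_k$.

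Finally I would invoke Rabinowitz's global bifurcation theorem applied to $S(p,v)=0$ in $(1,+\infty)\times X$: because the Leray–Schauder index of the trivial branch changes at $(p_k,u_{p_k})$, there exists a closed connected component $\cC(p_k)\subset \Sigma$ through $(p_k,u_{p_k})$ such that either $\cC(p_k)$ is unbounded in $(1,+\infty)\times X$, or $\cC(p_k)$ returns to the curve $\cS$ at another point $(p_h,u_{p_h})$ where the Leray–Schauder index again jumps. By the discussion above, such return points are precisely Morse index changing points, yielding alternative (b) with some $h\neq k$. The main obstacle I anticipate is verifying that restricting to $X$ truly reduces the kernel to dimension one and that the degree jump is odd (i.e., genuinely $\pm 1$); this requires the explicit $O(N-1)$-invariant spherical harmonics from the Appendix together with the simplicity of $\a_1(p)$ as the first eigenvalue of $\widehat L_p$, and both ingredients are needed to make the Rabinowitz alternative clean enough to exclude spurious bifurcation at non-sign-changing degeneracies.
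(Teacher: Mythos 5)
Your overall strategy --- setting up the problem as a fixed point equation in the $O(N-1)$-invariant space $X$, observing that at each degeneracy the kernel in $X$ is one-dimensional (since $\alpha_1(p)+\mu_k=0$ can hold for only one $k$ at a given $p$, and the $O(N-1)$-invariant spherical harmonic for $\mu_k$ is unique up to a constant), and detecting a jump of the Leray--Schauder index of the radial solution exactly when the sign change \eqref{1.5} occurs --- is the same degree-theoretic route as the proof the paper relies on (Theorem 3.3 of \cite{G}, whose mechanism is reproduced in the proof of Proposition \ref{p1} above).

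There is, however, a genuine gap at the final step. The classical Rabinowitz alternative, which is what you invoke, only guarantees that a bounded continuum $\mathcal{C}(p_k)$ meets the radial curve $\mathcal{S}$ at another \emph{degeneracy} point, i.e.\ a point $(p_h,u_{p_h})$ with $\alpha_1(p_h)+\mu_h=0$; it does not guarantee that this return point is a \emph{Morse index changing} point. Your claim that ``such return points are precisely Morse index changing points'' is exactly what has to be proved: degeneracy is only a necessary condition for bifurcation, and the paper explicitly cannot exclude degeneracy points at which $\alpha_1(p)+\mu_k$ touches zero without changing sign (see the Remark after the proof of Theorem \ref{t1}); at such points the index does not jump, yet nothing in the vanilla alternative prevents the continuum from returning there and nowhere else. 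To obtain alternative b) as stated one needs the Ize-type refinement (cf.\ \cite{N}) that the paper uses: enclose the bounded continuum in a bounded open set $\mathcal{O}$ with $\partial\mathcal{O}\cap\Sigma=\emptyset$, augment $S(p,v)$ by the scalar equation $\|v-u_p\|_X^2-r^2=0$, and use homotopy invariance to show that the sum over all meeting points of the local contributions $(-1)^{m(p_i-r)}-(-1)^{m(p_i+r)}$ is zero; since the contribution at $p_k$ is $\pm 2$, some other meeting point must also carry a nonzero contribution, i.e.\ must be a Morse index changing point. Without this counting argument your proof yields only the weaker alternative ``unbounded, or returns to some degeneracy point.'' A minor further caveat: transversality of the eigenvalue crossing is not available here --- only the sign change \eqref{1.5} --- so the Crandall--Rabinowitz style argument should be replaced by the parity count of eigenvalues greater than $1$ that you mention parenthetically.
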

\noindent Now we are in position to prove our first new result.
\begin{proposition}\label{p1}
Let $(p_k,u_{p_k})$ be a Morse index changing point and let $\cC(p_k)$
be as defined before. If $\cC(p_k)$ is bounded,
the number of the Morse index changing points in $\cC(p_k)$ including $
(p_k,u_{p_k})$ is even.
\end{proposition}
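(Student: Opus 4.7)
The plan is to apply a Leray--Schauder degree argument in the space $X$ of $O(N-1)$-invariant functions, refining the Rabinowitz alternative of Theorem \ref{t12}. The key structural observation is that on $X$ the linearization of \eqref{1} at any Morse index changing point $(p_{k_i},u_{p_{k_i}})$ has a one-dimensional kernel: by \eqref{au} this kernel consists of products $w_{1,p_{k_i}}(|x|)\phi_{k_i}(x/|x|)$, and imposing $O(N-1)$-invariance on $\phi_{k_i}$ reduces the spherical-harmonic eigenspace of $\mu_{k_i}$ to a single dimension (spanned by the Gegenbauer polynomial in $\cos\theta$ described in the Appendix). Combined with the simplicity of $w_{1,p_{k_i}}$ as first eigenfunction of $\widehat L_{p_{k_i}}$, this yields a simple zero-eigenvalue crossing on $X$; in particular the Leray--Schauder local index $i_X(u_p)=(-1)^{m_X(p)}$ of the radial solution in $X$ (where $m_X(p)$ is the $O(N-1)$-invariant Morse index) flips sign as $p$ crosses each $p_{k_i}$, and the associated crossing number is $\pm 1$.

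Since the operator $T$ of \eqref{tpv} is compact and $\cC(p_k)$ is bounded by hypothesis, $\cC(p_k)$ is a compact connected subset of $(1,+\infty)\times X$. The Morse index changing points form a discrete subset of $\cS$ (by analyticity of $\a_1(p)$ and Theorem \ref{t02}), so the intersection $\cC(p_k)\cap\cS$ is finite, say $\{(p_{k_i},u_{p_{k_i}})\}_{i=1}^m$ with $p_{k_1}<\ldots<p_{k_m}$. Using the maximality of $\cC(p_k)$ as a connected component of $\Sigma$, together with a standard separation argument for a compact connected component of a closed set in a metric space, I would construct a bounded open isolating neighborhood $\cO\subset(1,+\infty)\times X$ of $\cC(p_k)$ whose topological boundary contains no solutions of $S(p,v)=0$.

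With $\cO$ in hand, one forms the Leray--Schauder degree of $S(p,\cdot)$ on each slice $\cO_p$ (after translating so that $u_p$ is at the origin and excising a small ball around it), obtaining a locally constant integer-valued function $d(p)$. Outside $[p_{k_1},p_{k_m}]$ the slice $\cO_p$ contains no solutions (else $\cC(p_k)$ would extend past those $p$-values), so $d\equiv 0$ there. Across each $p_{k_i}$ the function $d$ undergoes a jump equal to the crossing number $\pm 1$ of the simple bifurcation established in the first paragraph. Since the total variation of $d$ from $p<p_{k_1}$ to $p>p_{k_m}$ equals zero, the $m$ jumps of sign $\pm 1$ must cancel, which forces $m$ to be even. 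The main obstacle is the careful construction of the isolating neighborhood $\cO$ and the bookkeeping of the bifurcation jumps in the $O(N-1)$-invariant setting, both of which ultimately reduce to the one-dimensionality of the kernel on $X$ established in the first paragraph.
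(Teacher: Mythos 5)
Your overall strategy is a Rabinowitz-type slice-degree count, a close cousin of the paper's argument (which instead uses the Ize augmented map $S_r(p,v)=\left(S(p,v),\|v-u_p\|_X^2-r^2\right)$ on all of $\cO$), but two of your steps fail as written. First, the assertion that $d(p)=0$ for $p$ outside $[p_{k_1},p_{k_m}]$ ``else $\cC(p_k)$ would extend past those $p$-values'' is not justified: boundedness of $\cC(p_k)$ only forces it to return to $\cS$, it does not confine its $p$-projection to the interval spanned by its intersections with $\cS$, so the branch (and possibly other pieces of $\Sigma$ trapped inside the isolating neighborhood $\cO$) can perfectly well produce nontrivial zeros in slices $\cO_p$ with $p>p_{k_m}$ or $p<p_{k_1}$. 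The vanishing has to be taken at the ends of the $p$-projection of $\overline{\cO}$, where the slices are empty, and the local constancy of $d(p)$ must then be justified across every nondegenerate $p$; this requires handling radial points $(p,u_p)$ crossing $\partial\cO$ (which are legitimate zeros of $S$ on the lateral boundary) and nontrivial solutions touching the excised sphere $\partial B_\delta(u_p)$. These are exactly the difficulties the paper's augmented map is designed to bypass, since radial points are never zeros of $S_r$ and may therefore sit on $\partial\cO$ harmlessly.

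Second, you claim the local index of $u_p$ in $X$ flips, with jump $\pm1$ of $d$, at \emph{every} point of $\cC(p_k)\cap\cS$, deducing the flip from the one-dimensionality of the kernel in $X$. One-dimensionality alone gives no flip: the continuum may return to $\cS$ at a degeneracy exponent (a solution of \eqref{2.1}) at which $\a_1(p)+\mu_h$ touches zero without changing sign; there the $X$-Morse index, hence $(-1)^{m_X(p)}$, does not change and the jump is $0$. The paper keeps precisely this distinction: its local contribution is $(-1)^{m(p_i-r)}-(-1)^{m(p_i+r)}$, equal to $\pm2$ at Morse index changing points (characterized by the sign change \eqref{1.5}) and to $0$ at the other bifurcation points, and the parity conclusion is drawn only for the Morse index changing ones, which is what the proposition states. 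With uniform $\pm1$ jumps at all intersection points you would be proving a different (and unjustified) statement, namely evenness of the number of all intersections with $\cS$. Also, for the record, at a genuine Morse index change the jump of the excised slice degree is $\pm2$, not $\pm1$, since the index passes from $+1$ to $-1$ or vice versa; this slip does not affect the parity argument, but the two issues above do need to be repaired, essentially by reproducing the paper's bookkeeping.
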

\noindent This result is based on an improved version of the Rabinowitz alternative due to Ize (see \cite{N}). We report the proof for completeness, see also \cite{AG}.\\
\begin{proof} If
$\cC(p_k)$ is bounded then $b)$ of Theorem \ref{t12} holds and $\cC(p_k)$ must meet the curve $\cS$ at least in one point  $ (p_h, u_{{p_h}})$  such  that $p_h$ is a degeneracy point, i.e. satisfies \eqref{2.1}. But it can meet the curve $\cS$ also in other bifurcation points. Since $\cC(p_k)$ is bounded and the exponents $p_i$ of the bifurcation points must satisfy \eqref{2.1} then $\cC(p_k)$ can meet $\cS$ at most in finitely many 
bifurcation
points $(p_i,u_i)$, $i=1,\dots,n$ with  $p_1<p_2<\dots<p_n$.
Arguing as in the proof of Theorem 3.3 in \cite{G},
we can find  a bounded open set
$\cO\subset (1,+\infty)\times X$ such that $\cC(p_k)\subset \cO$, $\de
O\cap \Sigma=\emptyset$ where $\Sigma$ is as defined in \eqref{2.5}. Moreover we can  assume that $\cO$ does not contain points $(p,\up)$ if
$|p-p_i|\geq \e_0$ for $i=1,\dots, n$ and $\e_0>0$ such that there are not degeneracy points in
$\cup_{i=1}^n (p_i-2\e_0,p_i+2\e_0)$. 
For $\cO$ as above and $r>0$, consider the map
$$\begin{array}{llll}S_r(p,v):&\hbox{ }\bar \cO&\rightarrow&\hbox{  } X\times \R\\
&(p,v)&\mapsto &\left(S(p,v),\nor v-\up\nor_X^2 -r^2\right)
\end{array}$$
where $\nor \cdot\nor_X$ stands for the usual norm in the space $C^{1,\a}_0(A)$.
Now, $\mathit{deg}\left( S_r(p,v),\cO,(0,0)\right)$ is defined since
on $\de \cO$ there are no  solutions of $S(p,v)=0$ different from the
radial solution $u_p$, and hence
$0=\nor v-\up \nor_X<r$ for such any solution. Furthermore the degree is
independent of $r>0$. For large $r$, $S_r(p,v)=(0,0)$ has no solutions in
$\cO$, and hence has degree zero. On the other hand, for small  $r$, if
$(p,v)$ is a solution of $S_r(p,v)=(0,0)$, then $\nor v-\up\nor_X =r$, and hence
$p$ is close to one of the $p_i$, $i=1,\dots,n$. But then the sum of
local degrees of $S_r$ in the neighborhoods of each of the $p_i$ is
equal to zero, so that
\begin{equation}\label{2.6}
0=\sum_{i=1}^n \mathit{deg}\left( S_r(p,v),\cO\cap
B_{r}(p_i,u_{p_i}),(0,0)\right).
\end{equation}
In particular we choose $r<\e_0$ for $\e_0$ defined as before.
In order to compute the degree of $S_r(p,v)$ in $\cO\cap B_{r}(p_i,u_{p_i})$ we
use again the homotopy invariance of the degree. Let us define
$$S_r^t(p,v)=\left(S(p,v), t(\nor v-\up\nor_X^2
-r^2)+(1-t)(2p_ip-p^2-p_i^2+r^2)\right)$$
for $t\in [0,1]$.
As before $\mathit{deg}\left( S_r^t(p,v), \cO\cap B_{r}(p_i,u_{p_i}),
  (0,0)\right) $ is well defined  since there are no solutions on the
  boundary if $r$ is small (recall that $u_{p_i\pm r}$ are isolated if
  $r<\e_0$). Moreover the degree is independent of
  $t$. For $t=1$ we have $S_r^1(v,p)=S_r(p,v)$, while for $t=0$,
$S_r^0(p,v)=\left(S(p,v),2p_ip-p^2-p_i^2+r^2\right)$ and
\begin{eqnarray}
&& \mathit{deg}\left( S_r^0(p,v),\cO\cap B_{r}(p_i,u_{p_i}),
  (0,0)\right)\nonumber\\
&&= \mathit{deg} \left(
  S(p,v),\cO\cap B_{r}(p_i,u_{p_i}) ,0\right)\cdot \mathit{deg}\left( 2p_ip-p^2-p_i^2+r^2,
  \{|p-p_i|<r\},0\right).\nonumber
\end{eqnarray}
Now
$$\mathit{deg}\left( 2p_ip-p^2-p_i^2+r^2,
  \{|p-p_i|<r\},0\right)=1$$
for $p=p_i-r$ while
$$\mathit{deg}\left( 2p_ip-p^2-p_i^2+r^2,
  \{|p-p_i|<r\},0\right)=-1$$
for $p=p_i+r$. This implies that
\begin{eqnarray}
&&\mathit{deg}\left( S_r(p,v),\cO\cap
B_{r}(p_i,u_{p_i}),(0,0)\right)=\nonumber\\
&&\mathit{deg}\left(
S(p_i-r,\cdot),\cO_{p_i-r},0\right) - \mathit{deg}\left(
S(p_i+r,\cdot),\cO_{p_i+r},0\right)\nonumber\\
&&=(-1)^{m(p_i-r)}-(-1)^{m(p_i+r)}\nonumber
\end{eqnarray}
where as in \cite{G} we denote by $\cO_p$ the set $\{v\in \cO\, :\,
(p,v)\in \cO\}$.\\
We conclude that if $(p_i, u_{p_i})$ is a Morse index changing
point then
$$\mathit{deg}\left( S_r(p,v),\cO\cap
B_{r}(p_i,u_{p_i}),(0,0)\right)=\pm 2$$ while if $(p_i, u_{p_i})$ is
not a Morse index changing point then
$$\mathit{deg}\left( S_r(p,v),\cO\cap
B_{r}(p_i,u_{p_i}),(0,0)\right)=0.$$ Since the nonzero terms in
(\ref{2.6}) correspond  only to the Morse index changing points, and since these
terms add up to zero, there must be an even number of Morse index
changing points.
\end{proof}
We let $(p_k,u_{p_k})$ be a bifurcation point, corresponding, via  (\ref{1.3a})
to the eigenvalue $\mu_k$. Then the
linearized operator $L_{p_k}$ at $u_{p_k}$ has, up to a constant
multiple, a unique solution in $X$, see \cite{SW1}, which has the form given by
(\ref{au}). We let $w_k$  be this unique normalized (in the
$L^{\infty}$-norm) eigenfunction of $L_{p_k}(v)=0$ in $X$. Now we want to prove the following result:
\begin{proposition}\label{p2}
There exists $\rho_0>0$ such that if 
$(p,v)\in \left(\cC(p_k)\setminus\{(p_k,u_{p_k})\}\right)\cap
B_{\rho}(p_k,u_{p_k})$, then $v-\up=\a_p w_k+r_p$, where $w_k$ is as
before and $\a_p\to 0$ as $p\to p_k$
and $r_p=o(\a_p)$ as $p\to p_k$.
\end{proposition}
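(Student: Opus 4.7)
The plan is to carry out a local Lyapunov--Schmidt reduction at $(\pk, u_{\pk})$ and read the claim off the resulting parametrization. Since $T(p,\cdot)$ is compact on $X$ for fixed $p$, the linearized operator $L_{\pk}:=I-D_v T(\pk,u_{\pk})$ is Fredholm of index zero on $X$. Because $L_{\pk}$ has, up to constant multiples, $w_k$ as its unique null solution in $X$ (see \cite{SW1}), its kernel is one-dimensional and I can split $X=\langle w_k\rangle\oplus R$, where $R=L_{\pk}(X)$, with associated continuous projections $P$ and $Q=I-P$.

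Next I would rewrite the equation $S(p,v)=0$ in terms of the perturbation $w:=v-\up$, setting $F(p,w):=S(p,\up+w)$. The trivial branch of radial solutions becomes $F(p,0)\equiv 0$, and $D_w F(p,0)=L_p$ depends continuously on $p$. Decomposing $w=\alpha w_k+r$ with $r\in R$, the equation $F(p,w)=0$ splits into a range equation $QF(p,\alpha w_k+r)=0$ and a bifurcation (kernel) equation $PF(p,\alpha w_k+r)=0$. Since $QL_{\pk}|_R\colon R\to R$ is an isomorphism, the implicit function theorem solves the range equation uniquely for $r=r(p,\alpha)\in R$ near $(\pk,0)$, with $r(\pk,0)=0$. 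Uniqueness combined with $F(p,0)\equiv 0$ forces $r(p,0)\equiv 0$ for $p$ near $\pk$; differentiating the range equation in $\alpha$ at $(\pk,0)$ gives $QL_{\pk}\,\partial_\alpha r(\pk,0)=-QL_{\pk}w_k=0$, so $\partial_\alpha r(\pk,0)=0$ as well. A two-variable Taylor expansion then yields $\|r(p,\alpha)\|_X\le C(\alpha^2+|\alpha|\,|p-\pk|)$, i.e.\ $r(p,\alpha)=o(|\alpha|)$ as $(p,\alpha)\to(\pk,0)$.

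Finally I would translate this back to the continuum. Choose $\rho_0>0$ small enough that every zero of $F$ in $B_{\rho_0}(\pk,u_{\pk})$ is of the form $w=\alpha w_k+r(p,\alpha)$ for a unique small $\alpha$. For $(p,v)\in(\cC(\pk)\setminus\{(\pk,u_{\pk})\})\cap B_{\rho_0}(\pk,u_{\pk})$, define $\alpha_p$ by $P(v-\up)=\alpha_p w_k$ and set $r_p:=Q(v-\up)$. Since $v\neq \up$ (as $\Sigma$ excludes radial solutions), one must have $\alpha_p\neq 0$; otherwise $r_p=r(p,0)=0$ would yield $v=\up$. Continuity of $P$ forces $\alpha_p\to 0$ as $p\to \pk$, and $r_p=r(p,\alpha_p)=o(\alpha_p)$ by the previous step. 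I expect the main obstacle to be not the reduction itself, which is standard, but the simplicity of $\ker L_{\pk}$ on the full $O(N-1)$-invariant space $X$---this is what makes the projections $P,Q$ well-defined, and it is precisely the content of the cited result from \cite{SW1}.
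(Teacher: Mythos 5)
Your proposal is essentially correct, but it follows a genuinely different route from the paper. The paper does not perform a Lyapunov--Schmidt reduction: it picks, via Hahn--Banach, \emph{any} functional $l_k\in X'$ with $\langle l_k,w_k\rangle=1$, defines $\alpha_p=\langle l_k,v-u_p\rangle$ and $r_p=v-u_p-\alpha_p w_k$, and proves by contradiction that near $(p_k,u_{p_k})$ every point of the continuum satisfies $|\langle l_k,v-u_p\rangle|>\eta\,\|v-u_p\|_\infty$: the normalized differences $z_n=(v_n-u_{p_n})/\|v_n-u_{p_n}\|_\infty$ solve a linear elliptic problem whose coefficient $h_n$ converges to $p_k u_{p_k}^{p_k-1}$, so by compactness $z_n\to\pm w_k$ uniformly, using only the simplicity (up to sign and normalization) of the kernel in $X$ from \cite{SW1}; the expansion and $r_p=o(\alpha_p)$ then drop out. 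Your implicit-function-theorem argument yields more (a full parametrization $r=r(p,\alpha)$ of all zeros of $S$ near the bifurcation point), at the price of hypotheses the paper never has to verify: $C^1$ dependence of $T$ on $(p,v)$ and, crucially, the splitting you assert.

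Two points need shoring up. First, $X=\langle w_k\rangle\oplus L_{p_k}(X)$ is \emph{not} a consequence of Fredholmness of index zero plus a one-dimensional kernel; it is the statement that $0$ is a semisimple eigenvalue, i.e. $w_k\notin\mathrm{range}\,L_{p_k}$. It is true here because the linearization is formally self-adjoint: if $L_{p_k}\psi=w_k$, testing the corresponding elliptic identity against $w_k$ forces $\int_A p_k u_{p_k}^{p_k-1}w_k^2\,dx=0$, which is impossible; alternatively one can split the target space along $\mathrm{range}\,L_{p_k}$ and keep an arbitrary complement of the kernel in the domain, as the paper in effect does with $\ker l_k$. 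Second, the Taylor bound $\|r(p,\alpha)\|_X\le C(\alpha^2+|\alpha|\,|p-p_k|)$ presumes $C^2$ (or $C^{1,1}$) regularity of the reduced map, which the nonlinearity need not provide; but $r(p,0)\equiv 0$, continuity of $\partial_\alpha r$ and $\partial_\alpha r(p_k,0)=0$ already give $r(p,\alpha)=o(|\alpha|)$, which is all you use. Finally, the phrase ``$\Sigma$ excludes radial solutions'' is inaccurate, since $\Sigma$ is a closure and contains the radial bifurcation points $(p_h,u_{p_h})$; to exclude radial points of $\cC(p_k)\setminus\{(p_k,u_{p_k})\}$ in the small ball you should shrink $\rho_0$ so that $p_k$ is the only degeneracy exponent in the corresponding interval (degeneracy exponents are isolated), a step the paper also uses implicitly when it divides by $\|v_n-u_{p_n}\|_\infty$.
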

\begin{proof}
With the previous notations we let $l_k$ be an
element of the dual space $X'$ of $X$,  such that $<l_k,w_k>=1$, where
$<\cdot,\cdot>$ denotes the duality between $X$ and $X'$. This element
$l_k$ exists thanks to the Hahn-Banach Theorem. Let $X_k=\{z\in
X\,\hbox{ such that } <l_k,z>=0\}$. Since $\mathit{dim}\left(\mathit{Ker}\,
L_{p_k}\right)$ is finite, then $\mathit{Ker}\, 
L_{p_k}$ is complemented in $X$, see for example \cite{M} pag
300. Hence we can  decompose  $X=\R\oplus
X_k$, and every $z\in X$ can be written as  $z=\a w_k+r$, with
$\a=<l_k,z>$ and $r\in X_k$. \\ 
For $\eta\in(0,1)$ and $\gamma>0$, we let 
$$K_{\gamma,\eta}=\{(p,v)\in X\hbox{ such that }|p-p_k|<\gamma \hbox{
  and }|<l_k,v-\up>|>\eta \nor v-\up\nor_{\infty}\}$$
where $\nor\cdot \nor_{\infty}$ denotes the usual $L^{\infty}$-norm. 
We want to prove, first, that, for any $\eta$ and any $\gamma$, there exists $\rho_0>0$ such that for all $\rho<\rho_0$ we have
$\left(\cC(p_k)\setminus\{(p_k,u_{p_k})\}\right)\cap
B_{\rho}(p_k,u_{p_k})\subset K_{\gamma,\eta}$.  Here
$B_{\rho}(p_k,u_{p_k})$ denotes the ball of radius $\rho$ in the product space
$(1,+\infty)\times X$. If there is not such a $\rho_0$, there exist
  sequences $\rho_n\to 0$ and $(p_n,v_n)\in \left(\cC(p_k)
  \setminus\{(p_k,u_{p_k})\}\right)\cap B_{\rho_n}(p_k,u_{p_k})$ such that
  $|p_n-p_k|\leq \rho_n<\gamma$, $v_n-u_{p_n}\to 0$ in $X$ and $|\a_n|:=|<l_k,
  v_n-u_{p_n}>|\leq \eta \nor v_n-u_{p_n}\nor_{\infty}$.  Letting
  $z_n=\frac{v_n-u_{p_n}}{\nor v_n-u_{p_n}\nor_{\infty}}$ we have that $z_n$
  satisfies
\begin{equation}\label{**}
\left\{ \begin{array}{ll}
-\Delta z_n=h_n(x)z_n +\l z_n& \hbox{ in }A\\
z_n=0 & \hbox{ on }\de A
\end{array}\right.
\end{equation}
where
\begin{equation}\label{2.10}
h_n(x)=p_n\int_0^1 \left( u_{p_n}+t(v_n-u_{p_n})\right)^{p_n-1}dt
\end{equation}
and $h_n(x)\to p_k u_{p_k}^{p_k-1}$ in $X$ as $n\to +\infty$. Then $\nor
z_n\nor_{\infty}=1$ and $z_n\to z$ uniformly in $\bar A$ where $z$ is a
solution of
\begin{equation}\label{2.11}
  \left\{\begin{array}{ll}
-\Delta z=p_k u_{p_k}^{p_k-1}z+\l z &\hbox{ in }A\\
z=0 & \hbox{ on }\de A
\end{array}\right.
\end{equation}
such that $\nor z\nor_{\infty} =1$. This implies that either $z=w_k$ or $z=-w_k$. Moreover
$\frac{\a_n}{\nor v_n-u_{p_n}\nor_{\infty}}:=\frac {|<l_k,v_n-u_{p_n}>|}{\nor v_n-u_{p_n}\nor_{\infty}}\to |<l_k,\pm
w_k>|=1>\eta$ and we get a contradiction. \\
Thus there exists a
$\rho_0>0$ as above. Using the previous decomposition $X=\R\oplus
X_k$, we have that $v-\up=\a_p w_k+r_p$ where $\a_p:=<l_k,v-\up>$ and
$r_p:=v-\up-\a_pw_k\in X_k$. \\
Now, if $p_n\to p_k$ then $|\a_n|:=|\a_{p_n}|=|<l_k,v_n-u_{p_n}>|=\nor
v_n-u_{p_n}\nor_{\infty}|<l_k,z_n>|=\nor
v_n-u_{p_n}\nor_{\infty}(1+o(1))$. This implies that $\a_n\to 0$ as
$p_n\to p_k$. Finally 
$r_n:=r_{p_n}=v_n-u_{p_n}-\a_n w_k$ so that 
\begin{eqnarray}
&&\nor r_n\nor_{\infty} \leq \nor v_n-u_{p_n}\nor_{\infty}+|\a_n|\,\nor w_k\nor_{\infty}<\frac 1{\eta}\,\,
  |<l_k,v_n-u_{p_n}>|+|\a_n|\nonumber\\
&& =|\a_n|+\frac 1{\eta}\,|\a_n|\,\,
  |<l_k,w_k>|+\frac 1{\eta}\,\,
  |<l_k,r_n>|=\frac 1{\eta}\,
  |\a_n|+|\a_n|.\nonumber
\end{eqnarray}
This shows that $r_n\to 0$ as $p_n\to p_k$. Finally 
$$\Big|<l_k,\frac {u_n-u_{p_n}}{\nor v_n-u_{p_n}\nor_{\infty}}>\Big|=\frac {\a_n}{\nor
  v_n-u_{p_n}\nor_{\infty}}\to 1$$
so that 
$$\frac {r_n}{\nor v_n-u_{p_n}\nor_{\infty}}=\frac{v_n-u_{p_n}-\a_n w_k}{\nor
  v_n-u_{p_n}\nor_{\infty}}\to w_k-w_k=0.$$
This shows that $r_n=o(|\a_n|)$ as $p_n\to p_k$ and finishes the proof.
\end{proof}
\noindent This proposition gives us the behavior of the branch of
  solutions $\cC(p_k)$ near a bifurcation point $(p_k,u_{p_k})$.
\sezione{Proof of the main result.}\label{s3}
In this section we want to prove Theorem \ref{t1}. As before we
consider functions which are $O(N-1)$-invariant, i.e. the space $X$
defined in (\ref{2.3}). It is easy to see (see the Appendix for the
details) that if $(\rho,\phi_1,\dots,\phi_{N-2},\theta)$ with
$a\leq \rho\leq b$ and $\phi_i\in[0,2\pi]$ for $i=1,\dots,N-2$ and
$\theta\in[0,\pi]$, are the radial coordinates in $\R^N$, then a
$O(N-1)$-invariant function in $\R^N$ can be written as a function
which depends only on $\rho$ and
$\theta$. \\
We introduce the following cones:
\begin{equation}\label{3.1}
\mathcal{K}^1=\left\{\begin{array}{l}v\in C^{1,\a}(\bar A)\,\hbox{ s.t. }v \hbox{ is }
O(N-1)-\hbox{invariant}, \, v\geq 0 \hbox{ in }A,\\ 
v=v(\rho, \theta)  \hbox{ in radial coordinates for } (\rho, \theta)\in A,\\
 v(\rho,\theta) \hbox{ is even in } \theta \hbox{ and }v(\rho,\pi+\theta)=v(\rho,\pi-\theta)\\ 
v(\rho,\theta) \hbox{ is non increasing in }\theta \hbox{ for } \theta\in [0,\pi], \rho\in[a,b] 
\end{array}\right\}
\end{equation}
and
\begin{equation}\label{3.1-bis}
\mathcal{K}^2=\left\{\begin{array}{l}v\in C^{1,\a}(\bar A)\,\hbox{ s.t. }v \hbox{ is }
O(N-1)-\hbox{invariant}, \, v\geq 0 \hbox{ in }A,\\ 
v=v(\rho, \theta)  \hbox{ in radial coordinates for } (\rho, \theta)\in A,\\
v(\rho,z) \hbox{ is even in } z, \hbox{ where  } z=\cos\theta, \hbox{ for } z\in[-1,1]\\
v(\rho,\theta) \hbox{ is non increasing in }\theta \hbox{ for } \theta\in [0,\frac \pi 2], \rho\in[a,b]\ 
\end{array}\right\}.
\end{equation}
As said in the Introduction functions that belong to $\mathcal{K}^1$ are Foliated Schwarz symmetric, see \cite{GPW} for some comments on this type of symmetry.\\
Since the terminology is not uniform in the literature we recall that
$W$ is a cone in $X$, if $W$ is a closed convex set in $X$ such that
$\g W\subseteq W$ for any $\g\geq 0$ and $W\cap -W=\{\emptyset\}$.\\
First we can prove the following result:

\begin{lemma}\label{l31} For any $p\in(1,+\infty)$ the map
  $T(p,-):X\rightarrow X$, defined in (\ref{tpv}), maps the cone $\cK^i$ into
  itself.
\end{lemma}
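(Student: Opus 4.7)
The plan is to show $u:=T(p,v)\in\mathcal{K}^i$ whenever $v\in\mathcal{K}^i$ by breaking the cone conditions into regularity, positivity, symmetry and monotonicity and checking each in turn. The function $u$ solves $-\Delta u-\lambda u=v^p$ in $A$ with $u=0$ on $\partial A$, so $u\in C^{1,\alpha}(\overline{A})$ by elliptic regularity; since $v^p\ge 0$ and $\lambda\le 0$ the operator $-\Delta-\lambda$ is order-preserving, so $u\ge 0$. The symmetry properties I would deduce by uniqueness of solutions to the Dirichlet problem: since $v^p$ is $O(N-1)$-invariant, for every $g\in O(N-1)$ the function $u\circ g$ solves the same boundary value problem as $u$ and therefore coincides with $u$; applying the same argument to the equatorial reflection across $\{x_N=0\}$ shows that $u$ inherits from $v\in\mathcal{K}^2$ the evenness in $x_N$, i.e.\ the evenness in $z=\cos\theta$.

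The substantive point is the monotonicity of $u$ in $\theta$, which I would prove by a reflection plus maximum-principle argument. For $\mathcal{K}^1$, fix $\nu\in S^{N-1}$ with $\nu\cdot e_N>0$, let $H=\nu^\perp$, $\sigma_H(x)=x-2(x\cdot\nu)\nu$ and $A^+_\nu:=\{x\in A:x\cdot\nu>0\}$. On $A^+_\nu$ one has $\sigma_H(x)\cdot e_N=x\cdot e_N-2(x\cdot\nu)(\nu\cdot e_N)<x\cdot e_N$, so $\theta(\sigma_H x)>\theta(x)$ and the monotonicity of $v$ yields $v^p(\sigma_H x)\le v^p(x)$. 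Setting $w:=u\circ\sigma_H-u$, one has $(-\Delta-\lambda)w\le 0$ in $A^+_\nu$, $w=0$ on $H$ (fixed by $\sigma_H$) and on $\partial A\cap\overline{A^+_\nu}$ (Dirichlet), so the maximum principle gives $w\le 0$. Since any pair $0\le\theta_1<\theta_2\le\pi$ can be realized by choosing $\nu$ in the $(x_1,x_N)$-plane at angle $(\theta_1+\theta_2)/2-\pi/2$ from $e_N$ — so that $\sigma_H$ maps the point at angle $\theta_1$ to the one at angle $\theta_2$ — we conclude that $u(\rho,\cdot)$ is non-increasing on $[0,\pi]$.

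For $\mathcal{K}^2$ the inequality $v^p(\sigma_H x)\le v^p(x)$ no longer holds on all of $A^+_\nu$; locating the correct polarized domain is the main obstacle. A short computation gives
\[
x_N^2-(\sigma_H x)_N^2=4(x\cdot\nu)(\nu\cdot e_N)\bigl(x\cdot(e_N-(\nu\cdot e_N)\nu)\bigr),
\]
and, setting $\tilde\nu:=e_N-(\nu\cdot e_N)\nu\in H$ (nonzero for $\nu\neq\pm e_N$), on $A^+:=A\cap\{(x\cdot\nu)(x\cdot\tilde\nu)>0\}$ (with $\nu\cdot e_N>0$) one has $|x_N|>|(\sigma_H x)_N|$; the monotonicity of $v$ in $|\cos\theta|$ then gives $v^p(\sigma_H x)\le v^p(x)$ on $A^+$. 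The internal boundary of $A^+$ inside $A$ is contained in $\{x\cdot\nu=0\}\cup\{x\cdot\tilde\nu=0\}$: on the first set $\sigma_H x=x$, and on the second set $(\sigma_H x)_N=-x_N$ and $|(\sigma_H x)'|=|x'|$, so the previously established $O(N-1)$-invariance and $x_N$-evenness of $u$ force $u(\sigma_H x)=u(x)$. Thus $w=0$ on all of $\partial A^+$, the maximum principle gives $w\le 0$ on $A^+$, and the same two-dimensional construction as above — now with $\beta=(\theta_1+\theta_2)/2\in(0,\pi/2)$ to ensure $x\cdot\tilde\nu>0$ — realizes any $0\le\theta_1<\theta_2\le\pi/2$, giving non-increase on $[0,\pi/2]$ and completing the proof.
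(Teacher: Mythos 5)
Your proof is correct, but it proves the key monotonicity step by a genuinely different mechanism than the paper. The paper rewrites $-\Delta w-\lambda w=g^p$ in the coordinates $(\rho,\theta)$, differentiates the equation in $\theta$, and applies the Protter--Weinberger maximum principle to $w_\theta$ on the rectangle $(a,b)\times(0,\pi)$ (resp.\ $(a,b)\times(0,\tfrac\pi2)$ for $\cK^2$), using $\partial_\theta(g^p)\le 0$ together with the boundary conditions $w_\theta=0$ on all four sides (the lateral ones from $w=0$ on $\partial A$, the angular ones from the symmetries recorded in the Appendix); the sign of the zero-order coefficient $\tfrac{N-2}{\rho^2\sin^2\theta}-\lambda>0$ and the local boundedness of the singular coefficients are what make this work. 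You instead keep the problem in Euclidean variables and run a reflection/polarization argument: for $\cK^1$ you compare $u\circ\sigma_H$ with $u$ on the cap $\{x\cdot\nu>0\}$ for hyperplanes $H=\nu^\perp$ with $\nu\cdot e_N>0$, and for $\cK^2$ on the polarized region $\{(x\cdot\nu)(x\cdot\tilde\nu)>0\}$, where the identity $x_N^2-(\sigma_Hx)_N^2=4(x\cdot\nu)(\nu\cdot e_N)\,x\cdot\tilde\nu$ and the evenness/invariance of $u$ (which you correctly establish first, via uniqueness for $-\Delta-\lambda$ with $\lambda\le 0$) give both the sign of the right-hand side $v^p\circ\sigma_H-v^p$ and the zero boundary data for $w=u\circ\sigma_H-u$; the weak maximum principle then yields $w\le 0$, and your explicit two-dimensional choice of $\nu$ realizes any pair $\theta_1<\theta_2$ in $[0,\pi]$ (resp.\ $[0,\tfrac\pi2]$), which, combined with the already-proved rotational invariance of $u$, gives the angular monotonicity. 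What your route buys is that it only needs the weak maximum principle for $-\Delta-\lambda$ on subdomains of $A$ and $C^1$ (indeed only continuity plus uniqueness and comparison) of the solution, so it sidesteps both the degenerate coefficients $\cot\theta$, $\sin^{-2}\theta$ at the poles and the need to differentiate the equation; what the paper's route buys is a direct pointwise sign on $\partial_\theta w$ in one stroke, without having to parametrize reflections or identify the correct polarized domain (which, as you note, is the delicate point in the $\cK^2$ case and is handled correctly in your computation of $\tilde\nu$ and of the internal boundary pieces $\{x\cdot\nu=0\}$, $\{x\cdot\tilde\nu=0\}$).
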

\begin{proof}
Suppose $g\in \cK^i$, then the function $g^p\in \cK^i$ for $i=1,2$. We have that  $T(p,g)=w$ if $w$ is a solution to
\begin{equation}\label{3.2}
\left\{\begin{array}{ll}
-\Delta w-\l w =g^p & \hbox{ in }A\\
w=0&\hbox{ on }\de A.
\end{array}\right.
\end{equation}
First, since $g\geq 0$ in $A$ and $\l\le 0$, the Maximum principle implies  $w\geq 0$ in $A$. 
We already know that the
map $T(p,-)$ is invariant with respect the action of the group
$O(N-1)$, so that  $T(p,-)$ maps the space $X$ into itself. Thus $w$ is $O(N-1)$-invariant and, as
previously said, we can write $w=w(\rho, \theta)$ with the properties $w(\rho,\theta)=w(\rho,-\theta)$ and $w(\rho,\pi+\theta)=w(\rho,\pi-\theta)$ for every $(\rho,\theta)\in A$, see the Appendix for these details. 
Moreover, if $g$ is even in $z$ then $g^p$ is even in $z$ and so also $w=T(p,g)$ is even in $z$. 
Exploiting
this fact we can rewrite (\ref{3.2}) in radial coordinates, getting
that $w$ satisfies
\begin{equation}\label{3.3}
\left\{\begin{array}{ll}
-\frac{\de ^2 w}{\de \rho^2}-\frac{N-1}{\rho} \frac{\de w}{\de
  \rho}-\frac 1{\rho^2} \frac{\de ^2 w}{\de \theta^2}-\frac{N-2}{\rho^2}\cot \theta\frac{\de w}{\de \theta}-\l w=g^p & \hbox{ in }A\\
w=0&\hbox{ on }\de A
\end{array}\right.
\end{equation}
Differentiating with respect to $\theta$ we get that
$w_{\theta}:=\frac{\de w}{\de \theta}$ satisfies
$$-\frac{\de ^2 \wth}{\de \rho^2}-\frac 1{\rho^2}\frac{\de ^2 \wth}{\de \theta^2}-\frac{N-1}{\rho} \frac{\de \wth}{\de
  \rho}-\frac{N-2}{\rho^2}\cot \theta  \frac{\de \wth}{\de \theta}
+\frac{N-2}{\rho^2\sin^2\theta}\wth-\l \wth=pg^{p-1}\frac{\de
  g}{\de \theta}$$
for $\rho\in [a,b]$ and $\theta\in [0,\pi]$.  This is a second order operator uniformly elliptic in $[a,b]\times [0,\pi]$. The coefficient of the linear term is $c(\rho,\theta)=\frac{N-2}{\rho^2\sin^2\theta}-\l> 0$ in $(a,b)\times (0,\pi)$ and it is bounded in every closed ball in $(a,b)\times (0,\pi)$. Also the coefficients of the first order terms, i.e. $\frac{N-1}{\rho}$ and $\frac{N-2}{\rho^2}\cot \theta$ are bounded on every closed ball in $(a,b)\times (0,\pi)$. Then, we consider first the case of $g\in\cK^1$, the maximum principle applies since $\frac{\de g}{\de \theta}\leq 0$ for $(\rho,\theta)\in (a,b)\times (0,\pi)$ and implies that $\wth$ reaches its maximum on the boundary of $(a,b)\times (0,\pi)$, see \cite{PW} pag 64. Then, the boundary conditions $v(a,\theta)=v(b,\theta)=0$ for every $\theta$ imply that $\wth(a,\theta)=\wth(b,\theta)=0$ for every $\theta\in [0,\pi]$. Finally the symmetry assumptions on $v$ imply, in turn, that $\wth(\rho,0)=\wth(\rho,\pi)=0$, see the Appendix for details, so that $\wth\leq 0$ in $[a,b]\times [0,\pi]$. This implies that $v\in \cK^1$ and concludes the proof in the case of $g\in \cK^1$.\\
Now assume $g\in \cK^2$. As said before $w=T(p,g)$ is even in $z$. By assumptions we have that $\frac{\de g}{\de \theta}\leq 0$ for $(\rho,\theta)\in (a,b)\times (0,\frac \pi 2)$. Again we can apply the maximum principle getting that $w_{\theta}$ reaches its maximum on the boundary of $(a,b)\times (0,\frac \pi 2)$. As before $\wth(a,\theta)=\wth(b,\theta)=0$ for every $\theta\in [0,\frac \pi 2]$ and  $\wth(\rho,0)=0$ for every $\rho\in(a,b)$. Finally since $w$ is even in $z$ we get that $w(\rho,\cos\theta)=w(\rho,-\cos\theta)$ and this implies $\wth(\rho,\frac \pi 2)=0$ for any $\rho\in(a,b)$. Then we have $\wth\leq 0$ in  $[a,b]\times [0,\frac \pi 2]$ showing that $w\in \cK^2$. This concludes the proof of the Lemma.
\end{proof}
Before proving the main result we need some notations, following
\cite{DA0}. Given a cone $\cK$ and a point $u\in\cK$ we let $\cK_u:=\{
v\in X\,:\, u+tv\in\cK\hbox{ for some }t>0\}$ and $S_u:=\{ v\in
\overline{\cK}_u\,:\, -v\in \overline{\cK}_u\}$. Then we have that, if
$\up$ is a radial solution of (\ref{1}) then $\frac {\de \up}{\de \theta}\equiv 0$ in $A$  and hence $\overline{\cK}^1_{\up}=\{v\in X\,:\,v=v(\rho, \theta)  \hbox{ in radial coordinates for } (\rho, \theta)\in A,\
 v(\rho,\theta) \hbox{ is even in } \theta \ , \ \frac {\de v}{\de
  \theta}\geq 0 \hbox{ in }A\}$ while $\overline{\cK}^2_{\up}=\{v\in X\,:\,v=v(\rho, \theta)  \hbox{ in radial }\linebreak[2] \hbox{coordinates }\hbox{for } (\rho, \theta)\in A,
v(\rho,z) \hbox{ is even in } z=\cos\theta \ , \ \frac {\de v}{\de
  \theta}\geq 0 \hbox{ for any } \theta\in [0,\frac \pi 2], \rho\in[a,b]  \}$. This implies that, using the fact that $v\in \cK^2$ is even in $z$, $S^1_{\up}=S^2_{\up} =\{ v\in X\,:\hbox{ such that }v \hbox{ is radially
  symmetric}\}$.   See \cite{DA1} for details.\\ 

\begin{proposition}\label{p31}
Let $\up$ be a radial solution of (\ref{1}) which is
nondegenerate. Then for $i=1,2$
\begin{equation}\label{3.5}
\mathit{index}_{\cK^i}\left( I-T(p,-),\up\right)=\left\{\begin{array}{ll}
\pm 1 & \hbox{ if }\a_1(p)+\mu_i>0\\
\\
0& \hbox{ if }\a_1(p)+\mu_i<0
\end{array}\right.
\end{equation}
where $\a_1(p)$ is the first eigenvalue of the {\em radial} operator
defined in (\ref{1.3}) and $\mu_1=N-1$ and $\mu_2=2N$.
\end{proposition}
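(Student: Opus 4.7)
The plan is to apply Dancer's cone fixed-point index formula (\cite{DA0}) to the compact map $T(p,\cdot)$ at the fixed point $u_p \in \cK^i$. By Lemma \ref{l31}, $T(p,\cdot)$ leaves $\cK^i$ invariant, and the nondegeneracy of $u_p$ means $1$ is not an eigenvalue of the linearization $L := T'(p, u_p) = p(-\Delta - \lambda)^{-1}(u_p^{p-1}\,\cdot\,)$; hence $u_p$ is an isolated fixed point. Since $L$ is the derivative of a cone-preserving map at a fixed point in the cone, $L$ maps $\overline{\cK^i}_{u_p}$ into itself. Dancer's theorem then yields
\[
\text{index}_{\cK^i}\bigl(I - T(p,\cdot), u_p\bigr) = \begin{cases} \text{index}_{S_{u_p}}\bigl(I - L|_{S_{u_p}}, 0\bigr), & \text{if $L$ has no eigenvector in $\overline{\cK^i}_{u_p}\setminus S_{u_p}$ with eigenvalue $>1$,} \\ 0, & \text{otherwise.} \end{cases}
\]
The index on the lineality space $S_{u_p}$ (radial functions) is the standard Leray--Schauder index $\pm 1$, so the proposition reduces to detecting when the cone tangent direction carries an eigenvalue of $L$ exceeding $1$.

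Eigenfunctions of $L$ in $X$ decompose via spherical harmonics as $v = w(r)\,Y_k(\theta)$, where $Y_k = G_k(\cos\theta)$ is the unique (up to scalar) $O(N-1)$-invariant spherical harmonic at $\mu_k$, as made explicit in the Appendix. Substituting into $Lv = \gamma v$ reduces to a radial eigenvalue problem for $\widehat L_p$ with an added term $\mu_k/r^2$, yielding the correspondence $\gamma_j^{(k)} > 1 \iff \alpha_j(p) + \mu_k < 0$; moreover the principal radial eigenfunction $w_{1,p}$ is positive on $(a,b)$. The constraint that $w(r) Y_k(\theta)$ lie in $\overline{\cK^i}_{u_p} \setminus S_{u_p}$ then singles out $k = i$: for $\cK^1$ the monotonicity of $v$ in $\theta$ on $[0,\pi]$ forces $G_k'$ to have constant sign on $(-1,1)$, which only $k = 1$ satisfies; for $\cK^2$ the evenness in $z = \cos\theta$ forces $k$ even (by the parity of Gegenbauer polynomials), and the monotonicity on $[0,\pi/2]$ forces $G_k'$ of constant sign on $(0,1)$, which among even $k \ge 2$ only $k = 2$ satisfies.

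Putting this together: if $\alpha_1(p) + \mu_i > 0$, then since the relevant mode is $k = i$ and $\alpha_j(p) \ge \alpha_1(p)$ for $j \ge 1$, every eigenvalue $\gamma_j^{(i)}$ of $L$ with eigenvector in $\overline{\cK^i}_{u_p} \setminus S_{u_p}$ is below $1$, and Dancer's formula gives index $\pm 1$; if $\alpha_1(p) + \mu_i < 0$, the eigenvector $w_{1,p}(r) Y_i$ lies in $\overline{\cK^i}_{u_p} \setminus S_{u_p}$ with eigenvalue exceeding $1$, forcing index $0$. The main subtlety is excluding multi-mode eigenvectors of $L$ lying in the cone with $\gamma > 1$. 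This is resolved by noting that $L$ preserves each angular subspace $X_k$, so any eigenvector of $L$ decomposes as a sum of single-mode eigenvectors sharing the same eigenvalue, reducing the analysis to the single-mode case treated above.
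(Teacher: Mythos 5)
Your argument is essentially the paper's proof: both apply Theorem 1 of \cite{DA0} to $T(p,\cdot)$ on the invariant cone $\cK^i$ (Lemma \ref{l31}), reduce the index computation to deciding whether $T'(p,\up)$ has an eigenvalue in $(1,+\infty)$ with eigenvector in $\overline{\cK}^i_{\up}\setminus S_{\up}$, and settle this via the product form \eqref{au} of the eigenfunctions, the correspondence with the sign of $\a_1(p)+\mu_k$, and the explicit $O(N-1)$-invariant harmonics ($\Phi_1=\cos\theta$ monotone on $[0,\pi]$, $\Phi_2$ even in $z=\cos\theta$ and monotone on $[0,\tfrac{\pi}{2}]$) from the Appendix. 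One minor caveat: in your multi-mode remark the single-mode components of a cone eigenvector need not themselves lie in $\overline{\cK}^i_{\up}$, so the exclusion step should be finished using the parity constraint (for $\cK^2$) and the monotonicity of $k\mapsto\mu_k$ rather than by forcing $k=i$ componentwise --- a point on which the paper's own proof is equally brief.
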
  
\begin{proof}
To calculate the index of $I-T(p,-)$ in the cone $\cK^i$ at the radial solution $\up$ we
use Theorem 1 in \cite{DA0}. First we  observe that, with the previous
notations, we have that $\cK^i-\cK^i$ is dense in $X$. 
Moreover by assumptions we have that $\up$ is a fixed point
of $T(p,-)$ in $\cK^i$ and $T(p,\up)$ is differentiable at $\up$ with
$T'(p,\up)$ invertible, since $u_p$ is nondegenerate. We are in position to apply
Theorem 1 in \cite{DA0} getting that
$$\mathit{index}_{\cK^i}\left(
I-T(p,-),\up\right)=\left\{\begin{array}{l}
0 \ \hbox{ if }\a_1(p)+\mu_i<0\\
\\
 \mathit{index}_X\left(
I-T(p,-),\up\right)=\pm 1 \,\,\hbox{ otherwise. }
\end{array}\right.$$ 
This claim follows from \cite[Theorem 1]{DA0} if we check that 
$T'(p,\up)$ has an eigenvalue in $(1,+\infty)$
  with corresponding eigenvector in $\overline{\cK}^i_{\up}\setminus
  S_{\up}$ if and only if $\a_1(p)+\mu_i<0$, see also Lemma 2 in \cite{DA0} and the Remark after it.\\
This is equivalent to show that the linearized operator has a negative eigenvalue with eigenfunction in $\overline{\cK}^i_{\up}\setminus S_{\up}$.  In \cite{GGPS} it is shown that the solutions of the linearized equation have the form given in \eqref{au}. This result holds also for the eigenfunctions of the eigenvalue problem with weight associated to the linearized equation, see \cite{GGN} for a proof of this assertion. Then, if we restrict to the space $X$ we have that an eigenvalue of the linearized problem becomes negative each time $\a_1(p)+\mu_k$ becomes negative and the corresponding eigenfunctions have the form
given in \eqref{au}, i.e. is the product of a positive radial function for a $O(N-1)$-invariant spherical harmonic function. Using the characterization of $O(N-1)$-invariant spherical harmonics we have that the linearized operator has a negative eigenvalue in $\cK^1$ if and only if $\a_1(p)+\mu_1<0$, while the linearized operator has a negative eigenvalue in $\cK^2$ if and only if $\a_1(p)+\mu_2<0$, since the eigenfunction corresponding to the negative eigenvalue $\a_1(p)+\mu_1$ does not belong to $\cK^2$ (it is not even in $z$). This finishes the proof.
\end{proof}

\begin{proof} [Proof of Theorem \ref{t1}]
We prove the result in the case of the exponent $p_1$ related by \eqref{1.3a} to $\mu_1$. The case of the exponent $p_2$ related to $\mu_2$ follows in the same way substituting the cone $\cK^1$ with $\cK^2$.\\ 
Let $p_1,\dots,p_M$ be the Morse index changing points related by
(\ref{1.3a}) to the first eigenvalue $\mu_1$. We can repeat the proof
of Theorem 3.3 in \cite{G} using the cone $\cK^1$ instead of the space $X$. Hence we get, for any
$p_j$, $j=1,\dots,M$, the
existence of a continuum $\cC(p_j)$ of solutions of (\ref{1}) which lies in
the cone $\cK^1$. Further  this continuum either is unbounded in $\cK^1$
or it must
intersect the curve of radial solutions $\cS$ in another Morse index
changing point. Moreover the points at which $\cC(p_j)$ can intersect
the curve of radial solutions $\cS$ are related to the first
eigenvalue $\mu_1$. This follows since otherwise the continuum $
\cC(p_j)$ is not contained in $\cK^1$, see also Proposition
\ref{p2}. Repeating the proof of Proposition \ref{p1}, in the cone
$\cK^1$ we have that the number of  Morse index changing points which
belong to a bounded continuum $\cC(p_j)$ has to be even.
On the other hand the number of Morse index changing points
corresponding to the eigenvalue $\mu_1$, is odd, since
$\a_1(p)+\mu_1>0$ if $p$ is near $1$ while $\a_1(p)+\mu_1<0$ if $p$ is
large enough.\\
This implies the existence of a value $p_1$ such that
$\a_1(p_1)+\mu_1=0$ and
$\cC(p_1)$ is unbounded in X.
\end{proof}
\begin{remark}
We suspect that the equation $\a_1(p)+\l_k=0$ has only one solution,
but we are not able to prove it. In that case any degeneracy point
would be
a Morse index changing point and the branch of bifurcating solutions
would  be unbounded.
\end{remark}
Now we sketch the proof of Corollary \ref{c1}. Let $(\rho,\theta)$ be
the radial coordinates in $\R^2$. As said before the
proof of this result follows using the cones
\begin{eqnarray}
\cK^n&:=&\left\{ v\in C(A)\,:\, v\geq 0\hbox{ in }A,
v(\rho ,\theta)=v(\rho,\theta+\frac{2\pi}{n}) \hbox{ for }(r,\theta)\in A\,,
v\hbox{ is even in }\theta\,,\right.\nonumber\\
&& \left. v(\rho,\theta)\hbox{ is decreasing in
}\theta \hbox{ for }0<\theta<\frac{\pi}{n}\, ,\, a\leq \rho\leq b\hbox{
  and } v=0\hbox{ on }\de A\right\}\nonumber
\end{eqnarray}
introduced by Dancer in \cite{DA1}. 
For these cones $\cK^n$ the analogous of Lemma 1 and Theorem 1 in
\cite{DA1} holds. These cones allow to separate continua of solutions
of (\ref{1}) in $\R^2$ related by (\ref{1.3a}) to a different eigenvalue
$\mu_k$. Using, as in the proof of Theorem \ref{t1}, the Proposition
\ref{p1} then we have that, corresponding to any $k$, there exists at
least a continuum $\cC(p_k)$ which is unbounded in $X$. Finally since
we are in dimension 2 the solutions of (\ref{1}) cannot blow up at a
finite value $p^*$, so that the unbounded continuum $ \cC(p_k)$ has to
be defined for every $p>p_k$.

\sezione{Appendix}
\noindent{\bf The $O(N-1)$-invariant functions in $\R^N$.}\\[.1cm]
Let us consider the spherical coordinates in $\R^N$, $(\rho,
\phi_1,\dots,\phi_{N-2},\theta)$ where $\phi_i\in[0,2\pi]$,
$i=1,\dots,N-2$ and $\theta\in [0,\pi]$. As usual
$$\left\{\begin{array}{ll}
x_i=\rho \sin \theta H_i( \phi_1,\dots,\phi_{N-2}) & i=1,\dots,N-2\\
x_N=\rho \cos \theta
\end{array}
\right.$$
where $H_i$ are suitable functions.
We are interested in the $O(N-1)$-invariant functions, i.e. functions
$v$ such that 
$$v(x_1,\dots,x_N)=v(g(x_1,\dots,x_{N-1}),x_N)$$
for any $g\in O(N-1)$. By definition, a function which is $O(N-1)$-invariant depends only on $\rho'=\sqrt{x_1^2+\dots+x_{N-1}^2}$ and $x_N$. Then, in radial coordinates, since $ \rho'=\sqrt{\rho^2-x_N^2}=\sqrt{\rho^2(1-\cos^2\theta)}=\rho|\sin \theta|$ and $x_N=\rho\cos\theta$, $v$ can be written as a function which depends only on $\rho$ and $\theta$.\\
Moreover, $v$ must satisfy $v(\rho, \theta)=v(\rho,-\theta)$ and $v(\rho, \pi+\theta)= v(\rho, \pi-\theta)$. This assertion follows since $v$ must depend only on $\rho'$ and $x_N$ and as functions of $\rho, \theta$ they satisfy  $\rho'(\rho,-\theta)=\rho'(\rho,\theta)$, $x_N(\rho,-\theta)= x_N(\rho,\theta)$ and $\rho'(\rho,\pi+\theta)=\rho|\sin(\pi+\theta)|=\rho|\sin\theta|=\rho|\sin(\pi-\theta)|=\rho'(\rho,\pi-\theta)$, $x_N(\rho,\pi+\theta)=\rho\cos(\pi+\theta)=-\rho\cos\theta=\rho \cos(\pi-\theta)= x_N(\rho,\pi-\theta)$.\\
Then an $O(N-1)$-invariant function $v$ satisfies $v(\rho, \theta)=v(\rho,-\theta)$ and $v(\rho, \pi+\theta)= v(\rho, \pi-\theta)$ and, if $v\in C^1(A)$ then it is $C^1((a,b)\times [0,\pi])$ and it verifies
$\frac{\de v}{\de \theta}(\rho,0)=\frac{\de v}{\de \theta}(\rho,\pi)=0$ for any $\rho>0$. \\[1cm]
\noindent{\bf Some remarks on the $O(N-1)$-invariant spherical
  harmonic functions.}\\[.1cm]
From what we said before the $O(N-1)$-invariant spherical harmonics
can be written as functions which depend only on the variable $\theta$. Then, the $k$-th $O(N-1)$-invariant spherical harmonic satisfies
\begin{equation}\label{A1}
-\sin^2 \theta \frac {\de^2 \Phi_k}{\de \theta^2}
-(N-2)\sin\theta\cos\theta \,\,\frac {\de \Phi_k}{\de \theta} =\l_k 
\sin ^2\theta \,\, \Phi_k
\end{equation}
for $\theta\in (0,\pi)$. 
Letting $z=\cos\theta$ we get that $\Phi_k(z)$ satisfies
\begin{equation}\label{A2}
(1-z^2)\frac {\de ^2 \Phi_k}{\de z^2}-(N-1)z\frac {\de \Phi_k}{\de
    z}+\l_k \Phi_k=0
\end{equation}
for $z\in(-1,1)$. 
This is a {\em Sturm-Lioville problem}. Then we can say that
the $k$-th eigenfunction has k different zeros in $[-1,1]$. From the
Sturm Theorem between two consecutive zeros of $\Phi_k$ there is a
zero of $\Phi_{k+1}$.\\ 
Equation \eqref{A2} is the {\em{Jacobi equation}} with, using the usual notations for Jacobi, $\a=\b=\frac{N-3}2$ and $n=k$. Then the bounded solutions of \eqref{A2} are given, up to a constant multiple, by the Jacobi polynomials, that can be written, using the Rodrigues' formula 
\begin{equation}\label{A3}
P_k^{(\frac{N-3}2,\frac{N-3}2)}(z)=\frac{(-1)^k}{2^k k!}(1-z^2)^{-\frac{N-3}2}\frac{\de ^k}{\de z^k}\left( (1-z^2)^{k+\frac{N-3}2}\right)
\end{equation}
for $z\in (-1,1)$ and any $k\geq 0$.\\
If $\a=\b=0$, i.e. for $N=3$, the Jacobi polynomials reduce to the Legendre polynomials
\begin{equation}\nonumber
P_k(z)=\frac 1{2^k k!}\frac{\de ^k}{\de z ^k}\left( z^2-1\right)^k
\end{equation}
and, indeed for   $N=3$, (\ref{A2}) is the classical Legendre equation.\\
Then the $O(N-1)$-invariant spherical harmonics are, up to a constant
multiple, the functions:
$$\Phi_k(\theta)=P_k^{(\frac{N-3}2,\frac{N-3}2)}(\cos \theta)$$
for $\theta\in (0,\pi)$, where $P_k^{(\frac{N-3}2,\frac{N-3}2)}$ are the Jacobi Polynomials.\\
To give some examples we have 
$$\begin{array}{l}
\Phi_1(\theta)=\frac {N-1}2 \cos\theta,\\
\Phi_2(\theta)= \frac{N-1}8\left(N\cos^2\theta -1\right),\\ 
\Phi_3(\theta)=\frac 1{48}(N+3)(N+1)\left((N+2)\cos^3\theta -3\cos\theta\right)
\end{array}$$
This implies, in turn, that the unique $O(N-1)$-invariant spherical
harmonic $\Phi_1$ related to the first eigenvalue $\l_1$ is, up to a
constant multiple, 
$$\Phi_1(\theta)=\cos\theta.$$
It then follows that $\frac{\de \Phi_1}{\de \theta}=-\sin \theta\leq0$ in $[0,\pi]$ while for all the other spherical harmonics the derivative $\frac{\de \Phi_i}{\de \theta}$ must change sign in $[0,\pi]$. This can be seen since 
from the formulation (\ref{A2})
$\Phi_1(z)=z$ changes sign once in $(-1,1)$ so that any other solution 
$\Phi_i(z)$  changes sign at least twice in $(-1,1)$ and this implies
that the derivative $\frac{\de \Phi_i}{\de z}$ has to change sign in
$(-1,1)$ so that also $ \frac{\de \Phi_i}{\de \theta}$ has to change sign
in $(0,\pi)$.

\end{document}